\newtheorem{theorem}{Theorem}
\newtheorem{prop}[theorem]{Proposition}
\newtheorem{cor}[theorem]{Corollary}
\newtheorem{lemma}[theorem]{Lemma}
\theoremstyle{definition}
\newtheorem{definition}[theorem]{Definition}
\newcommand{\cc}{\mathbb{C}}
\renewcommand{\epsilon}{\varepsilon}
\DeclareMathOperator{\re}{Re}
\newcommand{\charac}{\raisebox{0.4ex}{$\chi$}}
\newcommand{\Rmnum}[1]{\expandafter\@\romancap\romannumeral #1@}
\newcommand{\fh}{\mathfrak{H}}
\newcommand{\fk}{\mathfrak{K}}
\newcommand{\fb}{\mathfrak{B}}
\newcommand{\bh}{\fb(\fh)}
\newcommand{\fs}{\mathfrak{S}}
\newcommand{\subtriv}{\fs}
\newcommand{\id}{\mathrm{id}}
\begin{document}

\title{Aligned CP-semigroups}
\author{Christopher Jankowski}
\address{Christopher Jankowski, Department of Mathematics,
Ben-Gurion University of the Negev,
P.O.B. 653, Be'er Sheva 84105,
Israel.}
\email{cjankows@math.bgu.ac.il}
\thanks{C.J. was partially supported by the
Skirball Foundation via
the Center for Advanced Studies in Mathematics at Ben-Gurion
University of the Negev.
}

\author{Daniel Markiewicz}
\address{Daniel Markiewicz, Department of Mathematics,
Ben-Gurion University of the Negev,
P.O.B. 653, Be'er Sheva 84105,
Israel.}
\email{danielm@math.bgu.ac.il}

\thanks{D.M. and R.T.P. were partially supported by
grant 2008295 from the U.S.-Israel Binational Science Foundation.
}

\author{Robert T. Powers}
\address{Robert T. Powers, Department of Mathematics,
David Rittenhouse Lab.,
209 South 33rd St.,
Philadelphia, PA 19104-6395, U.S.A.}
\email{rpowers@math.upenn.edu}

\begin{abstract}
A CP-semigroup is aligned if its set of trivially maximal subordinates is totally ordered by subordination.  We prove that aligned spatial E$_0$-semigroups are prime: they have no non-trivial tensor product decompositions up to cocycle conjugacy. As a consequence, we establish the existence of uncountably many non-cocycle conjugate E$_0$-semigroups of type II$_0$ which are prime.
\end{abstract}
\subjclass[2000]{Primary: 46L55, 46L57}
\maketitle

Let $\fh$ be a Hilbert space, which we will always assume to be separable and infinite-dimensional, and let $\bh$ denote the $*$-algebra
of all bounded operators over $\fh$.
A \emph{CP-semigroup acting on $\bh$} is a point-$\sigma$-weakly continuous semigroup $\alpha =\{\alpha_t: \bh \to \bh\}_{t\geq0}$ of normal completely positive contractions
such that $\alpha_0 = \id$. When $\alpha_t$ is an endomorphism and $\alpha_t(I)=I$ for all $t\geq 0$, then $\alpha$ is called an \emph{E$_0$-semigroup}.
In the special case when $\fh = \fk \otimes L^2(0,\infty)$, a CP-semigroup $\alpha$ acting on $\bh$ is called a \emph{CP-flow over $\fk$} if $\alpha_t(A)S_t = S_tA$ for all $A \in \bh$, $t\geq 0$ where $\{ S_t : t\geq 0\}$ is the right shift semigroup. We will say that a CP-semigroup $\beta$ is a \emph{subordinate} of $\alpha$ if it also acts on $\bh$ and $\alpha_t -\beta_t$ is completely positive. If in addition $\beta$ is a CP-flow, then it is called a \emph{flow subordinate} of $\alpha$. We direct the reader to \cite{arv-monograph} for a general reference on the theory of CP-semigroups and to \cite{powers-CPflows} for the basic theory of CP-flows.

In this paper we study a class of CP-semigroups which has a set of subordinates which is minimal in a natural sense. We call such CP-semigroups aligned. This class is shown to include examples considered previously in \cite{powers-holyoke, alevras-powers-price, jankowski1, jmp1}. We prove that aligned E$_0$-semigroups have a notable property: they are prime in the sense that they have no non-trivial tensor product decompositions up to cocycle conjugacy. As a consequence, we establish the existence of uncountably many non-cocycle conjugate E$_0$-semigroups of type II$_0$ which are prime. The previously known non-trivial examples of prime E$_0$-semigroups were obtained independently in \cite{markiewicz-powers} (type II$_1$), \cite{tsirelson-transitivity} (type II$_1$) and \cite{liebscher} (type II$_k$ for $k=1,2,\dots$).

Powers introduced in ~\cite{powers-holyoke} the concept of $q$-purity which has been valuable for the study of E$_0$-semigroups (see for example \cite{alevras-powers-price, jankowski1}). The notion of $q$-purity was refined in \cite{jmp1}: a CP-flow is \emph{$q$-pure} if and only if its set of flow subordinates is totally ordered by subordination. It is clear that an E$_0$-semigroup which is in addition a $q$-pure CP-flow must have index 0 or 1, and must be of type I$_1$, II$_0$ or II$_1$. The case of type I$_0$ is excluded because an automorphism group cannot be a CP-flow.

The restriction to flow subordinates in the definition of $q$-pure CP-flows can obscure some useful properties of these CP-semigroups. In order to circumvent this difficulty we consider an alternative concept which is inspired by the notion of $q$-purity.

\begin{definition}\label{q-pure}
Let $\alpha$ be a CP-semigroup and let $\beta$ be a CP-semigroup subordinate of $\alpha$. We will say that $\beta$ is \emph{trivially maximal} if the semigroup $e^{kt}\beta_t$ is not a subordinate of $\alpha$ for $k>0$. We will denote by
$\subtriv(\alpha)$ the set of all trivially maximal subordinates of $\alpha$ partially ordered by subordination. We will say $\alpha$ is \emph{aligned} if $\subtriv(\alpha)$ is totally ordered.

Let $\beta$ be a trivially maximal subordinate of $\alpha$. We will denote by $\subtriv(\alpha; \beta)$ the set of all trivially maximal CP-semigroup subordinates of $\alpha$ which dominate $\beta$, partially ordered by subordination. We will say that $\alpha$ is \emph{aligned relative to $\beta$} if $\subtriv(\alpha; \beta)$ is totally ordered.
\end{definition}

Notice that if an aligned E$_0$-semigroup is spatial, then it must have index zero.

\begin{lemma}\label{lemma}
 A unital CP-semigroup is aligned if and only if its minimal dilation is aligned.
\end{lemma}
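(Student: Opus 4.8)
The plan is to transport the property of being aligned across the canonical correspondence between the subordinates of $\alpha$ and those of its minimal dilation, after verifying that this correspondence respects both subordination and trivial maximality. Write $\alpha^d$ for the minimal E$_0$-dilation of the unital CP-semigroup $\alpha$, acting on $\fb(\fh^d)$ with $\fh\subseteq\fh^d$ and compression projection $p$, so that $\alpha_t(pXp)=p\,\alpha^d_t(X)\,p$ and $\fh^d$ is the smallest $\alpha^d$-invariant subspace containing $\fh$. I would first recall the standard feature of minimal dilations (see \cite{arv-monograph}): compression $\gamma\mapsto\gamma^\flat$, with $\gamma^\flat_t(A)=p\,\gamma_t(A)\,p$, is an order-isomorphism from the poset of CP-semigroup subordinates of $\alpha^d$ onto the poset of CP-semigroup subordinates of $\alpha$, its inverse being the lifting $\beta\mapsto\beta^\sharp$ whose existence and uniqueness follow from minimality. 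In particular compression is order-preserving in both directions and injective on the subordinates of $\alpha^d$.

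The main step is to show that this isomorphism carries $\subtriv(\alpha^d)$ onto $\subtriv(\alpha)$; equivalently, that for a subordinate $\beta$ of $\alpha$ with lift $\gamma=\beta^\sharp$ and for each $k>0$, the semigroup $e^{kt}\beta_t$ is a subordinate of $\alpha$ if and only if $e^{kt}\gamma_t$ is a subordinate of $\alpha^d$. Both $e^{kt}\beta_t$ and $e^{kt}\gamma_t$ are again CP-semigroups. The forward implication is immediate: if $e^{kt}\gamma_t\le\alpha^d_t$ for all $t$, then compressing and using $(e^{k\cdot}\gamma)^\flat=e^{k\cdot}\gamma^\flat=e^{k\cdot}\beta$ gives $e^{kt}\beta_t\le\alpha_t$. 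For the converse, suppose $e^{kt}\beta_t\le\alpha_t$ for all $t$ and let $\delta=(e^{k\cdot}\beta)^\sharp$ be its lift, so $\delta\le\alpha^d$ and $\delta^\flat=e^{k\cdot}\beta$. Since $e^{-kt}\le 1$, the family $e^{-kt}\delta_t$ is a CP-semigroup subordinate of $\delta$, hence of $\alpha^d$, and its compression is $e^{-k\cdot}\delta^\flat=\beta=\gamma^\flat$. By injectivity of compression on the subordinates of $\alpha^d$ we obtain $e^{-kt}\delta_t=\gamma_t$, that is $e^{kt}\gamma_t=\delta_t\le\alpha^d_t$, as required.

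Consequently $\{k\ge 0 : e^{k\cdot}\beta\text{ is a subordinate of }\alpha\}=\{k\ge 0 : e^{k\cdot}\beta^\sharp\text{ is a subordinate of }\alpha^d\}$, so $\beta$ is trivially maximal exactly when $\beta^\sharp$ is. Thus $\gamma\mapsto\gamma^\flat$ restricts to an order-isomorphism of $\subtriv(\alpha^d)$ onto $\subtriv(\alpha)$, and one of these posets is totally ordered precisely when the other is, which is the claim. I expect the only delicate point to be the invocation of the subordinate correspondence for the minimal dilation — in particular the injectivity of compression on subordinates of $\alpha^d$ — which is where minimality does its work; the scaling argument itself, resting on the trivial observation that downward exponential scaling $\delta\mapsto e^{-k\cdot}\delta$ preserves subordination, is then routine.
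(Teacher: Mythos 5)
Your proposal is correct and follows essentially the same route as the paper: both use the order-isomorphism between the subordinate posets of $\alpha$ and $\alpha^d$ coming from minimality, and both verify that it preserves trivial maximality via the same scaling argument — lifting $e^{kt}\beta_t$, noting that $e^{-kt}$ times the lift is still a subordinate compressing to $\beta$, and invoking uniqueness of the lift. No substantive difference.
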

\begin{proof}
 Suppose that $\alpha$ is a unital CP-semigroup acting on $\fh$ with minimal dilation $\alpha^d$ acting on $\fb(\fh_1)$, i.e. there exists an isometry $W: \fh \to \fh_1$ such that
$$
\alpha_t(x)= W^*\alpha_t^d(WxW^*)W
$$
and $WW^*$ is increasing for $\alpha^d$. In order to prove the statement it suffices to show that there is an order isomorphism between $\subtriv(\alpha)$ and $\subtriv(\alpha^d)$. As proved in \cite{bhat-cocycles} or by Theorem~3.5 of \cite{powers-CPflows}, there exists an order isomorphism between the set of CP-semigroup subordinates of $\alpha$ and the set of CP-semigroup subordinates of $\alpha^d$. This isomorphism is described as follows. For every subordinate $\beta$ of $\alpha$ there exists a unique subordinate $\beta'$ of $\alpha^d$ such that
\begin{equation}\label{the-beta}
\beta_t(x) = W^*\beta_t'(WxW^*)W.
\end{equation}
It is clear that if $\beta$ is trivially maximal (with respect to $\alpha$), then $\beta'$ is trivially maximal (with respect to $\alpha^d$). Conversely, suppose that $\beta$ is not  trivially maximal, so that there exists $k>0$ such that $e^{kt}\beta_t$ is a subordinate of $\alpha$. Then there exists $\gamma$ a unique subordinate of $\alpha^d$ such that
$$
e^{kt} \beta_t(x) = W^*\gamma_t(WxW^*)W, \forall x \in \bh, t\geq 0.
$$
Therefore, by dividing by $e^{kt}$ we obtain that $\beta$ is also the compression of $e^{-kt}\gamma_t$ which is obviously also a subordinate since $k>0$. By uniqueness of the correspondence, $\beta'_t = e^{-kt}\gamma_t$ for all $t$. Hence $\beta'$ is not trivially maximal.
\end{proof}

CP-flow subordinates of a CP-flow are always trivially maximal,
 therefore if a CP-flow is aligned then it is automatically $q$-pure. We also note that a CP-flow is $q$-pure if and only if it is aligned with respect to the subordinate $x \mapsto S_txS_t^*$. We omit the elementary proof of this fact.

We now show that unital CP-flows are aligned if and only if they are $q$-pure and induce E$_0$-semigroups of type II$_0$. First let us approach the case when the CP-flow is in also a semigroup of endomorphisms.
\begin{prop}
Let $\alpha$ be an E$_0$-semigroup which is in addition a CP-flow over $\fk$. If $\alpha$ has type II$_0$, then $\mathfrak{S}(\alpha)$ is precisely the set of flow subordinates of $\alpha$. Therefore, $\alpha$ is aligned if and only if it is type II$_0$ and it is $q$-pure.
\end{prop}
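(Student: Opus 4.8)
The plan is to describe the subordinates of $\alpha$ by the Radon--Nikodym correspondence for E$_0$-semigroups. Every CP-semigroup subordinate $\beta\le\alpha$ is determined by a positive contractive local cocycle $\{C_t\}$, and $e^{kt}\beta_t$ is again a subordinate of $\alpha$ precisely when $\|C_t\|\le e^{-kt}$ for all $t$ (see \cite{bhat-cocycles,powers-CPflows}). Thus $\beta$ is trivially maximal if and only if $\|C_t\|$ does not decay exponentially, that is, no $k>0$ satisfies $\|C_t\|\le e^{-kt}$ for all $t$. Alongside this I would record a clean criterion for being a flow subordinate. By definition $\beta$ is a flow subordinate exactly when $\beta_t(x)S_t=S_tx$ for all $x$ and $t$; since $\alpha_t(x)S_t=S_tx$, the completely positive defect $\Psi=\alpha_t-\beta_t$ satisfies $\Psi(I)=I-\beta_t(I)$, and applying the Schwarz inequality to the positive operator matrix with entries $\Psi(I),\Psi(x),\Psi(x^*),\Psi(x^*x)$ shows that $\Psi(I)S_t=0$ forces $\Psi(x)S_t=0$ for all $x$. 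Hence $\beta$ is a flow subordinate if and only if the single condition $\beta_t(I)S_t=S_t$ holds for all $t$, i.e. $\beta_t(I)$ acts as the identity on $\Ran S_t$.

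With these two criteria the inclusion of flow subordinates into $\mathfrak{S}(\alpha)$ is immediate: if $\beta_t(I)S_t=S_t$ then $e^{kt}\beta_t(I)\le I$ fails for every $k>0$ (test against $S_t$), so $e^{kt}\beta$ is not a subordinate and $\beta$ is trivially maximal; this reproves the remark preceding the proposition. The substance is the reverse inclusion, and this is where type II$_0$ enters. The shift $\{S_t\}$ is itself a unit of $\alpha$, the intertwining relation $\alpha_t(x)S_t=S_tx$ being exactly the defining property of a unit, and $\alpha$ being of type II$_0$ means that it is spatial of index $0$, so that every unit of $\alpha$ is a scalar exponential multiple $e^{\lambda t}S_t$ of the shift and there is no additive part. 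I would argue that a trivially maximal $\beta$, whose cocycle $C_t$ has no exponential decay, must select an invariant unit direction, and that in the absence of an additive part this direction is forced to coincide with that of $S_t$, with the cocycle acting there as the identity, yielding $\beta_t(I)S_t=S_t$. I expect this structural step --- converting the non-decay of $\|C_t\|$ into the exact fixing of the shift unit by means of the index-zero hypothesis --- to be the main obstacle. It is precisely what fails for types I$_1$ and II$_1$, whose one-dimensional additive part supports trivially maximal subordinates that are not flows, which explains why the conclusion is special to II$_0$.

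Granting that $\mathfrak{S}(\alpha)$ is exactly the set of flow subordinates, the stated equivalence follows formally. If $\alpha$ is type II$_0$ and $q$-pure, then $\mathfrak{S}(\alpha)$ is the set of flow subordinates, which is totally ordered by the definition of $q$-purity, so $\alpha$ is aligned. Conversely, suppose $\alpha$ is aligned. Then $\alpha$ is $q$-pure, since an aligned CP-flow is automatically $q$-pure as noted above; being a $q$-pure E$_0$-semigroup that is a CP-flow it is of type I$_1$, II$_0$ or II$_1$, and being a CP-flow it is spatial (the shift is a unit), so as an aligned spatial E$_0$-semigroup it has index $0$. Among those three types only II$_0$ has index $0$, whence $\alpha$ is type II$_0$ and $q$-pure, completing the proof.
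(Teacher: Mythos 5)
Your setup is sound and your reduction is essentially the paper's: subordinates of $\alpha$ correspond to positive contractive local cocycles $C(t)$ via $\beta_t(x)=C(t)\alpha_t(x)$, flow subordinates are automatically trivially maximal, and the entire content lies in the converse under the type II$_0$ hypothesis. But the one step you defer --- ``converting the non-decay of $\|C_t\|$ into the exact fixing of the shift unit'' --- is precisely that content, and your sketch of it (a trivially maximal $\beta$ ``must select an invariant unit direction'' which ``is forced to coincide with that of $S_t$'') is a description of the desired conclusion, not an argument. The paper's route is: $V_t=C(t)S_t$ is a strongly continuous semigroup (using the cocycle identity $C(t+s)=C(t)\alpha_t(C(s))$ together with $S_tC(s)=\alpha_t(C(s))S_t$) which intertwines $\alpha_t$, hence is a unit; type II$_0$ forces $V_t=e^{-\kappa t}S_t$, and positivity of $S_t^*C(t)S_t=e^{-\kappa t}I$ forces $\kappa$ real and nonnegative. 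At this point one knows only that the \emph{corner} $S_t^*C(t)S_t$ equals $e^{-\kappa t}I$; to contradict trivial maximality when $\kappa>0$ one needs the full operator inequality $e^{\kappa t}C(t)\le I$, and this does not follow from the corner condition by any soft positivity argument. The paper obtains it by noting that $\gamma_t=e^{\kappa t}\beta_t$ is intertwined by the shift, hence is a CP$_\kappa$-flow, and then invoking Theorem~4.15 of \cite{powers-CPflows}, which asserts that every CP$_\kappa$-flow is in fact a contractive CP-flow. That external theorem is the missing ingredient in your proposal; without it, your reformulation ``trivially maximal iff $\|C_t\|$ does not decay exponentially'' cannot be upgraded to $C(t)S_t=S_t$.

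On the remaining points you are fine. The criterion ``$\beta$ is a flow subordinate iff $\beta_t(I)S_t=S_t$,'' proved via the $2\times 2$ positivity argument applied to $\Psi=\alpha_t-\beta_t$, is correct (though the paper does not need it: once $C(t)S_t=S_t$ one gets $\beta_t(x)S_t=C(t)\alpha_t(x)S_t=C(t)S_tx=S_tx$ directly). The easy inclusion of flow subordinates into $\mathfrak{S}(\alpha)$ and the final formal derivation of the ``therefore'' clause from the identification of $\mathfrak{S}(\alpha)$ with the flow subordinates both match the paper.
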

\begin{proof}
Suppose that $\alpha$ is an E$_0$-semigroup of type II$_0$ which is also a CP-flow over $\fk$.
Note that every flow subordinate of $\alpha$ is clearly an element of $\mathfrak{S}(\alpha)$, as flow subordinates are always trivially maximal. Conversely, let
 $\beta$ be a trivially maximal CP-semigroup subordinate of $\alpha$. By Theorem~3.4 of \cite{powers-CPflows} there exists a family of operators $(C(t))_{t\geq0}$ in $\bh$ such that
$$
\beta_t(x) = C(t)\alpha_t(x),
$$
where the family $(C(t))_{t \geq 0}$ is a contractive positive local cocycle, i.e. $C(t) \in \alpha_t(\bh)'$ and $0\leq C(t) \leq I$
for all $t>0$, $C(t+s) = C(t)\alpha_t(C(s))$ for all $t,s \geq 0$ and $t \mapsto C(t)$ is strongly continuous for $t\geq 0$
with $C(t) \to I$ as $t\to 0+$.

Let $S_t$ denote as usual the right shift semigroup on $\fh = \fk \otimes L^2(0,\infty)$ and let $V_t = C(t) S_t$. Note
that $V_t$ is strongly continuous and furthermore it is a semigroup: for all $t,r \geq 0$,
$$
V_t V_r = C(t) S_t C(r) S_r = C(t) \alpha_t(C(r)) S_t S_r = C(t+r) S_{t+r} = V_{t+r}.
$$
Moreover, $V_t$ is a unit of $\alpha_t$, since for all $x \in \bh$,
$$
\alpha_t(x) V_t = \alpha_t(x) C(t) S_t = C(t) \alpha_t(x) S_t = C(t) S_tx = V_t x.
$$
Notice, however, that $\alpha$ is type II$_0$, therefore there exists $\kappa \in \cc$ such that
$V_t = e^{-\kappa t} S_t$ for all $t\geq 0$. Furthermore, since $C(t)$ is a contraction, we have
that $V_t$ is a contraction for all $t>0$, hence we must have $\re(\kappa) \geq 0$. We now show
that in fact $\kappa$ must be a real number. Recall that $\beta_t$ is a CP-semigroup, and
$$
0 \leq S_t^*\beta_t(I) S_t = S_t^* C(t) S_t = e^{-\kappa t} I,
$$
hence $\kappa$ is real and satisfies $\kappa \geq 0$.

We will now prove that $\kappa=0$. Let $\gamma_t(x) = e^{\kappa t} \beta_t(x)$. Notice that $S_t$
is an intertwiner semigroup for $\gamma$: for all $t>0$ and $x\in\bh$,
$$
\gamma_t(x) S_t = e^{\kappa t} \beta_t(x) S_t = e^{\kappa t} C(t) \alpha_t(x) S_t
 = e^{\kappa t} C(t) S_t x = S_t x.
$$
It follows that $\gamma$ is a CP$_\kappa$-flow in the sense of Definition 4.0 of
 \cite{powers-CPflows}, that is to say, $e^{-\kappa t} \gamma_t$ is a CP-semigroup
 and $\gamma$ is intertwined by the shift. However, by Theorem~4.15 of
\cite{powers-CPflows}, every CP$_\kappa$-flow must be a CP-flow. But a CP-flow
is contractive, hence we must have $\gamma_t(I) \leq I$, thus
$$
e^{\kappa t} \beta_t(I) = e^{\kappa t} C(t) \leq I
$$
for all $t>0$. Therefore, we have that for all positive $t>0$,
$$
e^{kt}\beta_t(x) = e^{\kappa t} C(t) \alpha_t(x) \leq \alpha_t(x).
$$
Since $\beta$ is trivially maximal, we must have that $\kappa =0$. Thus we have shown that every element of $\mathfrak{S}(\alpha)$ is a CP-flow.

Therefore, if $\alpha$ is $q$-pure and of type II$_0$ then it is aligned. On the other hand, it is clear that if $\alpha$ is an aligned E$_0$-semigroup, then it is type II$_0$ and $q$-pure as discussed before the proposition.
\end{proof}

\begin{theorem}\label{aligned-and-q-pure}
Let $\alpha$ be a unital CP-flow over $\fk$. If the minimal dilation of $\alpha$ is type II$_0$, then $\mathfrak{S}(\alpha)$ is precisely the set of flow subordinates of $\alpha$. Therefore, $\alpha$ is aligned if and only if it is $q$-pure and its minimal dilation is type II$_0$.
\end{theorem}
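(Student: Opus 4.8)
The plan is to reduce the statement to the Proposition by passing to the minimal dilation and exploiting the order isomorphism of the Lemma. The crucial structural input I would invoke is that the minimal dilation $\alpha^d$ of a unital CP-flow $\alpha$ over $\fk$ is a unital E$_0$-semigroup which is itself a CP-flow (over a possibly larger Hilbert space $\fk_1$), and that the dilating isometry $W\colon \fh \to \fh_1$ intertwines the two right shift semigroups, $W S_t = S_t^d W$, where $S_t$ and $S_t^d$ denote the shifts on $\fh = \fk \otimes L^2(0,\infty)$ and on $\fh_1 = \fk_1 \otimes L^2(0,\infty)$ respectively. This is part of Powers' dilation theory for CP-flows in \cite{powers-CPflows}, and it is the only ingredient beyond the Lemma and the Proposition that the argument needs.

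Granting this, I would first establish that $\mathfrak{S}(\alpha)$ coincides with the set of flow subordinates of $\alpha$. The inclusion $\supseteq$ is immediate, since flow subordinates of a CP-flow are always trivially maximal. For the reverse inclusion, take $\beta \in \mathfrak{S}(\alpha)$ and let $\beta'$ be the trivially maximal subordinate of $\alpha^d$ corresponding to it under the order isomorphism of the Lemma, so that $\beta_t(x) = W^*\beta_t'(WxW^*)W$. Since $\alpha^d$ is an E$_0$-semigroup that is a CP-flow of type II$_0$ (by hypothesis on the dilation), the Proposition applies to $\alpha^d$ and yields that every element of $\mathfrak{S}(\alpha^d)$ is a flow subordinate; in particular $\beta_t'(A)S_t^d = S_t^d A$ for all $A \in \fb(\fh_1)$. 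Using $W S_t = S_t^d W$ and $W^*W = I$, a direct computation then gives
$$
\beta_t(x) S_t = W^*\beta_t'(WxW^*) S_t^d W = W^* S_t^d\, WxW^* W = W^* S_t^d W x = S_t x,
$$
so that $\beta$ is a flow subordinate of $\alpha$. This proves the first assertion.

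For the final ("therefore") clause, recall that $\alpha$ is aligned exactly when $\mathfrak{S}(\alpha)$ is totally ordered, and that $q$-purity means exactly that the flow subordinates of $\alpha$ are totally ordered. If $\alpha$ is $q$-pure and its minimal dilation is type II$_0$, then by the first assertion $\mathfrak{S}(\alpha)$ equals the totally ordered set of flow subordinates, so $\alpha$ is aligned. Conversely, if $\alpha$ is aligned, then its flow subordinates, being a subset of the totally ordered set $\mathfrak{S}(\alpha)$, are totally ordered, so $\alpha$ is $q$-pure; moreover by the Lemma $\alpha^d$ is aligned, and since $\alpha^d$ is an E$_0$-semigroup that is a CP-flow, the Proposition forces it to be of type II$_0$.

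The main obstacle is the structural input isolated in the first paragraph: that the minimal dilation of a CP-flow is again a CP-flow and that the dilating isometry respects the shift, so that compressions of flow subordinates are flow subordinates. Once this compatibility between the dilation and the flow structure is secured, the remainder is an assembly of the Lemma and the Proposition together with the short intertwining computation displayed above.
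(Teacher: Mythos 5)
Your proposal is correct and follows essentially the same route as the paper: it invokes the same structural fact (Powers' Lemma~4.50, that the minimal dilation of a CP-flow is a CP-flow with $WS_t = S_t^d W$), uses the same order isomorphism from the Lemma, performs the identical intertwining computation, and closes the equivalence the same way. No meaningful differences to report.
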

\begin{proof}
By Lemma~4.50 of \cite{powers-CPflows}, there exists a minimal dilation of $\alpha$ to an E$_0$-semigroup $\alpha^d$ which is a CP-flow over the Hilbert space $\fh_1 = \fk_1 \otimes L^2(0,\infty)$, i.e. there exists an isometry $W: \fh \to \fh_1$ such that
$$
\alpha_t(x)= W^*\alpha_t^d(WxW^*)W,
$$
$WW^*$ is increasing for $\alpha^d$ and if $S_t^d$ denotes the right shift semigroup on the space $\fh_1$, then $WS_t = S_t^dW$ for all $t>0$. We use the order isomorphism established in the proof of Lemma~\ref{lemma} that associates to each subordinate $\beta \in \mathfrak{S}(\alpha)$  a unique subordinate $\beta'\in \mathfrak{S}(\alpha^d)$ satisfying \eqref{the-beta}.
If $\alpha^d$ is type II$_0$ and $\beta \in \mathfrak{S}(\alpha)$, then it follows from the previous proposition that $\beta'$ is a CP-flow. Hence for all $t>0$,
$$
\beta_t(x)S_t =  W^*\beta_t'(WxW^*)WS_t =W^*\beta_t'(WxW^*)S_t^dW = W^*S_t^dWxW^*W = S_tx.
$$
In other words, $\beta$ is a CP-flow. Thus we proved that all elements of $\mathfrak{S}(\alpha)$ are CP-flows. On the other hand, every flow subordinate of $\alpha$ is clearly an element of $\mathfrak{S}(\alpha)$.

Thus, if $\alpha$ is a unital $q$-pure CP-flow and its minimal dilation has type II$_0$, then it is aligned. Conversely, it is clear that if $\alpha$ is aligned then its minimal dilation $\alpha^d$ as discussed above is also aligned, thus it has index zero. Since $\alpha^d$ is a CP-flow, it cannot be an automorphism group, hence it has type II$_0$. Moreover, since $\alpha$ is aligned, it is clearly $q$-pure.
\end{proof}

\section*{Prime E$_0$-semigroups}

\begin{definition}
An E$_0$-semigroup $\alpha$ is called \emph{prime} if, whenever $\alpha$ is cocycle conjugate to $\beta \otimes \gamma$ where $\beta$ and $\gamma$ are E$_0$-semigroups, then $\beta$ or $\gamma$ is type I$_0$.
\end{definition}

It follows from the complete classification of E$_0$-semigroups of type I in terms of the
index, and the additivity of the index with respect to tensoring, that a prime E$_0$-semigroup
of type I is cocycle conjugate either to an automorphism group or to the CAR/CCR flow of
index 1. It is a corollary of the work on the gauge group of an E$_0$-semigroup by Markiewicz-Powers in \cite{markiewicz-powers} or Tsirelson in \cite{tsirelson-transitivity}, that prime E$_0$-semigroups of type II$_1$ exist. And,
as it has belatedly come to our attention,  earlier Liebscher had proven that prime
E$_0$-semigroups of type II$_k$ exist  for $k\geq 1$ (see Proposition 4.32 and Note~4.33 in
\cite{liebscher}).
We establish that there exist uncountably many prime E$_0$-semigroups of type II$_0$.

\begin{theorem}\label{aligned-is-prime}
Aligned spatial E$_0$-semigroups are prime.
\end{theorem}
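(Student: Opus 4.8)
The plan is to argue by contradiction: suppose $\alpha$ is aligned and spatial but cocycle conjugate to $\beta\otimes\gamma$ with neither $\beta$ nor $\gamma$ of type I$_0$, and derive a contradiction; this is exactly the contrapositive of the definition of prime. I would reduce everything to a statement about $\mathfrak{S}(\beta\otimes\gamma)$, resting on two structural inputs. The first is that \emph{alignedness is a cocycle conjugacy invariant}. To see this I would use the E$_0$-analogue of the correspondence already invoked in the excerpt: the CP-semigroup subordinates of an E$_0$-semigroup $\theta$ correspond bijectively to positive contractive local cocycles, i.e. families $C(t)\in\theta_t(\bh)'$ with $0\le C(t)\le I$ and $C(t+s)=C(t)\theta_t(C(s))$, via $\delta_t(x)=C(t)\theta_t(x)$; the partial order and trivial maximality of $\delta$ translate into the pointwise order $C_1(t)\le C_2(t)$ and the non-existence of $k>0$ with $e^{kt}C(t)\le I$. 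If $\theta_t=\mathrm{Ad}(u_t)\circ\alpha_t$ for a unitary $\alpha$-cocycle $u$, then $C(t)\mapsto u_t^*C(t)u_t$ is a bijection between local $\theta$-cocycles and local $\alpha$-cocycles (the cocycle identity is preserved precisely because $C(t)$ commutes with $\alpha_t(u_s)\in\alpha_t(\bh)$), and it manifestly respects order and trivial maximality. Hence it induces an order isomorphism $\mathfrak{S}(\theta)\cong\mathfrak{S}(\alpha)$, so that $\alpha$ aligned forces $\beta\otimes\gamma$ aligned.

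Second, I would use that \emph{spatiality passes to the tensor factors}: since $\beta\otimes\gamma$ is cocycle conjugate to the spatial semigroup $\alpha$, its product system has a unit, and writing a unit $u_t\in E^\beta_t\otimes E^\gamma_t$ in Schmidt form one checks, using multiplicativity $u_{2t}=u_t\otimes u_t$ and $u_t\to 1$ as $t\to0$, that the ratio of the two largest Schmidt coefficients is scale-invariant and tends to $0$, forcing $u_t$ to have rank one. Thus $u_t=v_t\otimes w_t$ with $v,w$ units of $\beta,\gamma$, so both factors are spatial (this is essentially Arveson's index theory). Combined with the hypothesis that neither is type I$_0$, every $\beta_t,\gamma_t$ with $t>0$ is a proper (non-surjective) endomorphism.

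With both factors spatial I would then build two incomparable trivially maximal subordinates of $\beta\otimes\gamma$. Choose isometric units $U,V$ and set $\sigma^U_t(x)=U_txU_t^*$, $\sigma^V_t(x)=V_txV_t^*$; completing $U_t$ to an orthonormal basis of the intertwiner space shows $\sigma^U$ is a subordinate of $\beta$, and since $\sigma^U_t(I)=U_tU_t^*=:P_t$ is a nonzero projection, $e^{kt}P_t\le I$ fails on the range of $P_t$, so $\sigma^U$ is trivially maximal (similarly $\sigma^V$, with $Q_t=V_tV_t^*$). Writing $\beta=\sigma^U+\rho$, $\gamma=\sigma^V+\tau$ with $\rho,\tau$ CP, set $\delta_1=\sigma^U\otimes\gamma$ and $\delta_2=\beta\otimes\sigma^V$. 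Each is trivially maximal in $\beta\otimes\gamma$: because $\gamma$ is unital, slicing the second leg by a state at $I$ shows $e^{kt}\sigma^U\otimes\gamma\le\beta\otimes\gamma$ is equivalent to $e^{kt}\sigma^U\le\beta$, which fails, and symmetrically for $\delta_2$. The crux is incomparability: expanding gives $(\delta_2-\delta_1)_t=(\rho\otimes\sigma^V-\sigma^U\otimes\tau)_t$, and evaluating at $I$ (using $\beta_t(I)=\gamma_t(I)=I$) collapses this to $I\otimes Q_t-P_t\otimes I$, which is not positive since $P_t\neq 0$ and $Q_t\neq I$; hence $\delta_1\not\le\delta_2$, and the symmetric computation gives $P_t\otimes I-I\otimes Q_t\not\ge 0$ (using $P_t\neq I$, $Q_t\neq 0$), so $\delta_2\not\le\delta_1$. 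Thus $\mathfrak{S}(\beta\otimes\gamma)$ is not totally ordered, contradicting its alignedness.

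I expect the main obstacle to lie in the two invariance inputs rather than in the final computation: verifying carefully that the local-cocycle correspondence intertwines the partial order and the trivial-maximality condition under cocycle conjugacy, and justifying the descent of spatiality to the tensor factors. Once these are secured, the incomparability of $\delta_1$ and $\delta_2$ reduces to the elementary fact that the projections $I\otimes Q_t$ and $P_t\otimes I$ are incomparable, which delivers the contradiction and shows that $\beta$ or $\gamma$ must be type I$_0$.
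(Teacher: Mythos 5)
Your proposal is correct, and its core construction coincides with the paper's: both arguments pass to the contrapositive and exhibit the two trivially maximal subordinates $\theta^U\otimes\gamma$ and $\beta\otimes\theta^V$ built from isometric units $U$ of $\beta$ and $V$ of $\gamma$, with the same norm/projection argument for trivial maximality. You differ in two respects. First, your incomparability step is more elementary: you evaluate the difference of the two subordinates at the identity and note that $I\otimes V_tV_t^*-U_tU_t^*\otimes I$ (and its negative) fails to be a positive operator because the projections $U_tU_t^*$ and $V_tV_t^*$ are neither $0$ nor $I$; the paper instead compresses $\sigma^V_t-\sigma^U_t$ by $U_t\otimes I$ to deduce $\theta^V_t\geq\gamma_t$ and then invokes the purity (extremality) of the rank-one CP map $\theta^V_t$ to force $\gamma_t$ to be a scalar multiple of it, contradicting $\gamma$ not being of type I$_0$. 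Your computation is shorter and avoids the extreme-ray fact, at the cost of the easy check that the projections are proper (which your ``proper endomorphism'' remark supplies). Second, you explicitly furnish two reductions that the paper's proof takes for granted: that alignedness is a cocycle conjugacy invariant (your conjugated-local-cocycle argument is correct, and the commutation of $u_t^*C(t)u_t\in\alpha_t(\bh)'$ with $\alpha_t(u_s)$ is precisely why the cocycle identity survives), and that spatiality descends to the tensor factors so that the units $U,V$ exist. The latter is the one place where your write-up is a sketch rather than a proof: the Schmidt-coefficient argument does work (the coefficients multiply under the product-system multiplication, so $\lambda_2(t)/\lambda_1(t)$ is invariant under $t\mapsto t/2$, while for a normalized unit $\lambda_1(t)^2=e^{-pt}$ together with $\sum_i\lambda_i(t)^2=1$ forces that ratio to vanish as $t\to0^+$, followed by a measurable phase adjustment), but the phrase ``$u_t\to 1$ as $t\to 0$'' must be replaced by this normalization argument, since the product system has no fiber at $0$. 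As the paper simply assumes the factors are spatial, your version is, if anything, the more complete one.
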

\begin{proof}
We prove the contrapositive. Suppose that $\alpha$ is an E$_0$-semigroup and $\alpha = \beta
 \otimes \gamma$ where $\beta$ and $\gamma$ are two spatial E$_0$-semigroups neither of which
has type I$_0$. Without loss of generality, by applying an appropriate conjugacy, we
may assume that both act on $\bh$ where $\fh$ is infinite-dimensional and separable. Let
$U$ and $V$ be normalized units of $\beta$  and $\gamma$, respectively. Let us denote
by $\theta^U$ and $\theta^V$ the semigroups given by $\theta^U_t(x) = U_txU_t^*$ and
 $\theta^V_t(x)=V_txV_t^*$, which are E-semigroup subordinates of $\beta$ and $\gamma$,
 respectively (notice these are not unital since $\beta$ and $\gamma$ are not automorphism groups). Notice that $\sigma^V=\beta \otimes \theta^V$ is a subordinate of $\alpha$.
We show that it is trivially maximal. Suppose that $k \geq 0$ and $e^{kt}\sigma^V_t$ is also a
 subordinate of $\alpha$. Then we have that for all $x \in \bh$,
$$
 e^{kt} (I \otimes V_tV_t^*) = e^{kt} (\beta_t \otimes \theta^V_t)(I) = e^{kt} \sigma^V_t(I)  \leq \alpha_t(I) = I
$$
However $\| V_t \|=1$, hence by taking norms on both sides we conclude $e^{kt} \leq 1$ for all $t>0$. Thus $k=0$. Analogously, $\sigma^U=\theta^U \otimes \gamma$ is trivially maximal.

We prove that  $\sigma^V$ and $\sigma^U$ are incomparable. Suppose that $\sigma^V \geq \sigma^U$. Notice that for all $x,y \in \bh$ and $t>0$,
\begin{align*}
\sigma^V_t(x \otimes y) (U_t \otimes I) & =
\alpha_t(x)U_t \otimes \theta_t^V(x) = (U_t \otimes I) (x \otimes \theta^V_t(y)) \\
\sigma^U_t(x \otimes y) (U_t \otimes I) & =
\theta^U_t(x)U_t \otimes \gamma_t(x) = (U_t \otimes I) (x \otimes \gamma_t(y))
\end{align*}
Therefore we have that for all $x, y \in \bh$ and $t>0$,
$$
(U_t \otimes I)^* \Big[\sigma^V_t(x \otimes y) - \sigma^U_t(x \otimes y) \Big](U_t \otimes I) =
x \otimes [ \theta_t^V(y) - \gamma_t(y) ]
$$
Thus in the special case when $x=I$, we have that the map $y \mapsto I\otimes [\theta_t^V(y) - \gamma_t(y)]$ is completely positive for all $t>0$, hence $\theta^V \geq \gamma$. However $\theta^V$ is a pure element in the cone of completely positive maps, therefore we have that for every $t>0$, $\gamma_t$ is a multiple of $\theta_t^V$, and we have a contradiction since $\gamma$ is not type I$_0$. By symmetry, we obtain that $\sigma^U$ and $\sigma^V$ are incomparable as asserted. Thus we proved that $\alpha$ is not aligned.
\end{proof}

\begin{cor}\label{many-primes}
There exist uncountably many non-cocycle conjugate E$_0$-semigroups of type II$_0$ which are prime.
\end{cor}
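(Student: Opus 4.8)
The plan is to combine Theorem~\ref{aligned-is-prime} with the constructions of $q$-pure CP-flows already available in the literature. Theorem~\ref{aligned-is-prime} reduces primality to alignment, and Theorem~\ref{aligned-and-q-pure} characterizes alignment for CP-flows as being $q$-pure together with having a minimal dilation of type~II$_0$. So the entire problem collapses to producing uncountably many pairwise non-cocycle conjugate $q$-pure CP-flows whose minimal dilations are of type~II$_0$, and then passing to those minimal dilations.

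I would first record the reduction carefully. Let $\alpha$ be a unital $q$-pure CP-flow over some $\fk$ whose minimal dilation $\alpha^d$ has type~II$_0$. Theorem~\ref{aligned-and-q-pure} shows that $\alpha$ is aligned, and then Lemma~\ref{lemma} shows that $\alpha^d$ is an aligned E$_0$-semigroup. Being of type~II$_0$ it is spatial, so Theorem~\ref{aligned-is-prime} applies and $\alpha^d$ is prime. Hence each such CP-flow manufactures a prime E$_0$-semigroup of type~II$_0$, namely its minimal dilation, and it is precisely these minimal dilations that will form the desired family.

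For the supply of examples I would appeal to the explicit construction and classification of $q$-pure CP-flows over $\cc$ in \cite{jankowski1, jmp1} (continuing the line of \cite{powers-holyoke, alevras-powers-price}). There one builds CP-flows from boundary weight maps and isolates a continuum of them whose minimal dilations have type~II$_0$. By the previous paragraph every member of this continuum is aligned, and its minimal dilation is a prime type~II$_0$ E$_0$-semigroup.

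The main obstacle is showing that uncountably many of these minimal dilations are pairwise non-cocycle conjugate: they all have index zero and type~II$_0$, so index and type are useless as separating invariants. For this I would import the finer cocycle-conjugacy invariants of \cite{jankowski1, jmp1} attached to the boundary weight data, verifying that they descend to genuine invariants of the minimal dilation up to cocycle conjugacy and that an uncountable set of parameters realizes pairwise distinct values. This delicate separation of the examples is the step I would lean on the cited classification for, rather than reprove here.
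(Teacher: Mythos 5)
Your proposal follows essentially the same route as the paper: reduce via Theorem~\ref{aligned-is-prime} and Theorem~\ref{aligned-and-q-pure} to exhibiting uncountably many non-cocycle conjugate $q$-pure unital CP-flows with type II$_0$ minimal dilations, and draw these from the boundary weight constructions of \cite{jankowski1}. The paper simply makes your sketch concrete by writing down the specific one-parameter family $\omega(\rho)(A)=\rho(I)\mu_\lambda((I\otimes\nu)(A))$ for $\lambda\in(0,1/2)$ and citing Corollary~3.3, Lemma~4.3, Proposition~5.2 and Theorem~5.4 of \cite{jankowski1} for, respectively, the type II$_0$ property, $q$-purity, and the pairwise non-cocycle conjugacy that you correctly identified as the step to delegate to the cited classification.
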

\begin{proof}
By Theorem~\ref{aligned-is-prime}, it suffices to exhibit an uncountable family of non-cocycle conjugate aligned E$_0$-semigroups.  In order to do so, we consider a class of E$_0$-semigroups arising from boundary weight doubles as in \cite{jankowski1}. Let $g(x)$ be a fixed complex-valued measurable function such that $g \not \in L^2(0,\infty)$ yet $(1-e^{-x})^{1/2} g(x) \in L^2(0,\infty)$, and for each $t>0$ let $g_t=\charac_{(t,\infty)}g$, which is an element of $L^2(0,\infty)$. Define the weight on $\fb(L^2(0,\infty))$ given by
$\nu(A) = \lim_{t\to 0+} (g_t,Ag_t)$. For every $0 < \lambda <1/2$, let $\mu_\lambda: M_2(\cc) \to \cc$ be given by
$$
\mu_\lambda(X) = \lambda x_{11} + (1-\lambda) x_{22}.
$$
Let us define the boundary weight map from $M_2(\cc)_*$ to boundary weights on $\fb(\cc^2 \otimes
L^2(0,\infty))$ given by $\omega(\rho)(A) =
\rho(I) \mu_\lambda\big((I \otimes \nu)(A)\big)$ for all $\rho \in M_2(\cc)$ and $A$ in the
domain of finiteness of $I \otimes \nu$ (the so called null boundary algebra of definition 4.16
in \cite{powers-CPflows}). Then by Corollary~3.3 of \cite{jankowski1} and the assumptions on
$g(x)$, we have that $\omega$ gives rise to an E$_0$-semigroup of type II$_0$. Once one applies
Theorem~4.4 of \cite{jmp1} to reconcile the earlier definition of $q$-purity with the
one in this paper, we obtain from Lemma~4.3 and Proposition~5.2 of \cite{jankowski1} that $\omega$ gives
rise to a $q$-pure unital CP-flow. Therefore by Theorem~\ref{aligned-and-q-pure} it gives to a
aligned E$_0$-semigroup $\alpha^{\lambda}$. Finally, by Theorem~5.4 of \cite{jankowski1},  given
$\lambda, \zeta \in (0,1/2)$, we have that $\alpha^{\lambda}$ is cocycle conjugate to
$\alpha^\zeta$ if and only if $\lambda =\zeta$.
\end{proof}

We should point out that it is possible to obtain a different uncountable family of non-cocycle conjugate E$_0$-semigroups by using Theorem~3.22 of \cite{powers-holyoke}. For details see the discussion in the end of section III therein. This would be indeed be a different family from the one exhibited above in the sense that, by Corollary 5.5 of \cite{jankowski1}, the E$_0$-semigroups constituting both families are not cocycle conjugate.

\providecommand{\bysame}{\leavevmode\hbox to3em{\hrulefill}\thinspace}

\end{document}